\documentclass[10pt]{amsart}

\usepackage{amsfonts,amssymb,amscd,amstext}
%
%
%
%
\usepackage[latin1]{inputenc}

\usepackage[paper size={210mm,297mm},left=34mm,right=34mm,top=43.5mm,bottom=43.5mm]{geometry}
\usepackage[charter]{mathdesign}
\usepackage[colorlinks=true,linkcolor=blue,citecolor=red]{hyperref}
\usepackage{graphicx}

\usepackage{esint}
\usepackage{enumerate}

\pretolerance=0

\renewcommand{\le}{\leqslant}
\renewcommand{\ge}{\geqslant}
\newcommand{\ptl}{\partial}

\newcommand{\rr}{{\mathbb{R}}}

\newcommand{\G}{{\mathbb{G}}}

\newcommand{\hh}{{\mathbb{H}}}
\newcommand{\nn}{{\mathbb{N}}}

\newcommand{\hhh}{\mathcal{H}}

\newcommand{\escpr}[1]{g(#1)}
\newcommand{\Sg}{\Sigma}

\newcommand{\Om}{\Omega}
\newcommand{\eps}{\varepsilon}

\newcommand{\ga}{\gamma}
\newcommand{\Ga}{\Gamma}

\newcommand{\mnh}{|N_{H}|}

\newcommand{\n}{\nabla}

\DeclareMathOperator{\divv}{div}
\DeclareMathOperator{\tor}{Tor}

\newtheorem{theorem}{Theorem}[section]
\newtheorem{proposition}[theorem]{Proposition}

\newtheorem{corollary}[theorem]{Corollary}

\theoremstyle{definition}

\newtheorem{definition}{Definition} 

\theoremstyle{remark}

\newtheorem{remark}[theorem]{Remark}

\numberwithin{equation}{section}

\setcounter{tocdepth}{1}

\begin{document}

\title[The regularity of Euclidean Lipschitz boundaries with prescribed mean curvature]{The regularity of  Euclidean Lipschitz boundaries with prescribed \\mean curvature in three-dimensional contact\\ sub-Riemannian manifolds}

\author[M.~Galli]{Matteo Galli} \address{Dipartimento di Matematica \\
Universit\`a di Bologna \\ Piazza di Porta S. Donato 5 \\ 40126 Bologna, Italy}
\email{matteo.galli8@unibo.it}

\date{\today}

\thanks{M. Galli has been supported by the People Programme (Marie Curie Actions) of the European Union's Seventh
Framework Programme FP7/2007-2013/ under REA grant agreement n. 607643.}
\subjclass[2000]{53C17, 49Q20} 
\keywords{Sub-Riemannian geometry, contact manifolds, geometric variational problems, minimal surfaces, prescribed mean curvature surfaces, Heisenberg group. }

\begin{abstract}
In this paper we consider a set $E\subset\Om$ with prescribed mean curvature $f\in C(\Om)$ and Euclidean Lipschitz boundary $\partial E=\Sg$ inside a three-dimensional contact sub-Riemannian manifold $M$. We prove that if $\Sg$ is locally a regular intrinsic graph, the characteristic curves are of class $C^2$.  The result is shape and improves the ones contained in \cite{MR2583494} and \cite{GalRit15}. 
\end{abstract}

\maketitle

\bibliographystyle{alpha}

\thispagestyle{empty}

\section{Introduction}

The study of the sub-Riemannian area minimizers start with the seminal paper \cite{MR1404326}, where general properties and existence of sets with minimum perimeter are proved in Carnot groups. In the last two decades many effort are been devoted to the develop of an exhaustive theory for minimal surfaces in the sub-Riemannian setting. One of the main difficulties related with this propose  is to provide a regularity theory for critical points of the sub-Riemannian area functional.  

Nowadays in sub-Riemannian geometry the optimal regularity of critical points of the area is not clear. More precisely, there are only partial results  under some assumptions on the regularity of the area-stationary surfaces themselves. Many author focused their attention on the study of minimal $C^2$ surfaces or isoperimetric sets with $C^2$ boundary, see \cite{MR2354992}, \cite{MR2648078}, \cite{MR2472175}, \cite{MR2333095}, \cite{MR2455341},  \cite{MR2435652},  \cite{MR2609016},  \cite{MR2165405} and \cite{MR2043961}, among others. A very clear introduction on this first results is presented in the monograph \cite{MR2312336}. 

The first examples of non-smooth solutions of the Plateau problem with smooth boundary datum in a regular domain are constructed in \cite{MR2225631}. Later on, in \cite{MR2262784} and \cite{MR2448649}, the authors find examples of entire graphs over the $xy$-plane, the so called $t$-graphs, that are area-minimizers and Euclidean Lipschitz. On the other hand, we stress that these examples are not $\hh$-regular graphs in the sense of  \cite{MR1871966} and are smooth in their regular part. Another remarkable result is the existence of a non-entire Euclidean Lipschitz $\hh$-regular graph that is a global minimizers on a strip, \cite[Example~2.8]{MR2333095}.

In \cite{MR2262784} and \cite{MR2481053} it is studied  the existence and the regularity of $C^1$ $t$-graphs in $\hh^n$ that are critical points of the functional 
\begin{equation}
\label{eq:F}
\mathcal{F}(u)=\int_\Om |\nabla u+\vec{F}|+\int_\Om fu,
\end{equation}
on a domain $\Om\subset\rr^{2n}$, where $\vec{F}$ is a vector field and $f\in L^\infty(\Om)$. When $\vec{F}(x,y)=(-y,x)$, the integral $\int_\Om |\nabla u+\vec{F}|$ is the sub-Riemannian area of the graph over the domain $\Om$ in the $xy$-plane. Among several interesting results, they prove in \cite[Thm.~A]{MR2481053} that, in case $n=1$, $u\in C^1(\Om)$ is an stationary point of $\mathcal{F}$ and $f\in C^0(\Om)$, the characteristic curves are of class $C^2$.

Theorem~A in \cite{MR2481053} is well-known for $C^2$ minimizers and generalizes a previous result by Pauls \cite[Lemma~3.3]{MR2225631} for $H$-minimal surfaces with components of the horizontal Gauss map in the class $W^{1,1}$. For an explication of what horizontal means we refer to Definition~\ref{def:horizontal}. 

In \cite{MR2583494} the authors consider a Euclidean Lipschitz minimal intrinsic graph of a function $u$ over a domain $\Om$ inside a vertical plane. 
Under the assumption that $u$ is a vanishing viscosity minimal graphs (it means that there exists a family $\{u_j\}_{j\in \nn}$ of smooth functions $u_j:\Om\rightarrow \rr$ converging uniformly to $u$ on compact subsets of $\Om$ and such that the graph of $u_j$ is a Riemannian minimal surface),  they prove an intrinsic $C_\hh^{1,\alpha}$ regularity for the graph of $u$. As a consequence they obtain that the graph of $u$ is foliated by horizontal smooth parabolas. 

Very recently, in \cite{GalRit15} the authors consider the sub-Riemannian prescribed mean curvature functional
\begin{equation}
\label{eq:JOm}
\mathcal{J}(E,B)=P(E,B)+\int_{E\cap B} f,
\end{equation}
where $E$ is a set of locally finite sub-Riemannian perimeter in $\Om$, $P(E,B)$ is the relative sub-Riemannian perimeter of $E$ in a bounded open set $B\subset\Om$, and $f\in L^\infty(\Om)$. If $E\subset\hh^n$ is the subgraph of a function $t=u(x,y)$ in the Heisenberg group $\hh^n$, then $\mathcal{J}(E)$  coincides with \eqref{eq:F} taking $\vec{F}(x,y)=(-y,x)$.
In \cite{GalRit15}, see also \cite{gr-aim}, they prove that an intrinsic graph of a $C^1$ function that is a critical point of the prescribed mean curvature functional  is foliated by smooth curves in a 3-dimensional contact manifold. Such curves, usually called characteristic curves, are straight lines in $\hh^1$, recovering the result first presented in \cite{MR2165405} and \cite{MR2481053}.

The aim of this paper is to refine the argument in \cite{GalRit15} to prove a results in the spirit of \cite{MR2583494, MR2831583} without no assumption on the existence of a sequence of smooth minimal Riemannian graphs approximating the initial sub-Riemannian minimal surface. We shall prove in Theorem~\ref{th:main}

\begin{quotation}
Let $M$ be a $3$-dimensional contact sub-Riemannian manifold, $\Om\subset M$ a domain, and $E\subset\Om$ a set of prescribed mean curvature $f\in C(\Om)$ with Euclidean Lipschitz boundary $\partial E=\Sg$. If $\Sg$ is locally a Lipschitz regular graph, then the characteristic curves are of class $C^2$.
\end{quotation}

The regularity of characteristic curves provided in Theorem~\ref{th:main} allows us to define in Section~\ref{sec:mc} a mean curvature function $H$ in the regular part of $\ptl E$, that coincides with $f$. As a consequence of the definition of the mean curvature, we shall prove in Proposition~\ref{prop:C^k} that characteristic curves are of class $C^{k+2}$ in case $f$ is of class $C^k$ when restricted to a characteristic direction. Furthermore we shall characterize the equation of characteristic curves in Proposition~\ref{prop:foliazionepergeodesiche} 

\begin{quotation}
Let $E\subset\Om$ be a set of prescribed mean curvature $f\in C(\Omega)$ with Lipschitz boundary $\Sg$ in a domain $\Om\subset M$. Then the characteristic curves in $\Sg-\Sg_0$ are $\n\text{-geodesics}$ of curvature $H$, where $H$ is the mean curvature function and $\n$ the pseudo-hermitian connection. 
\end{quotation}
We remark that $\n\text{-geodesics}$ are not in general sub-Riemannian geodesics, see Remark~\ref{rem:geodeticheaconfronto}, and examples of area-stationary surfaces foliated by $\n\text{-geodesics}$ that are not sub-Riemannian geodesics can be found in \cite{MR3259763}.  We note that Proposition~\ref{prop:foliazionepergeodesiche} provides a characterization of characteristic curves in general 3-dimensional manifolds. This characterization  is more natural than the one presented in \cite{MR2165405}, \cite{MR2481053} and \cite{gr-aim}. Indeed the property that a minimal surface is foliated by horizontal straight lines is not preserved by contact changes of coordinates. In the Heisenberg structure used in \cite{MR2583494} minimal surfaces are foliated by horizontal parabolas, that are  $\n\text{-geodesics}$ with vanishing curvature. We also stress that the structure used in \cite{MR2583494}, see also \cite{MR2831583}, is motivated by application in image reconstruction and vision problems, see for example \cite{MR2235475}, \cite{MR2366414} and \cite{MR3077550}. 
Finally it is worth of mention that our results are optimal in view of \cite[Example~2.8]{MR2333095}.

The paper is organized as follows. In Section~\ref{sec:preliminaries} we provide the necessary background on contact sub-Riemannian manifolds and sets of locally finite perimeter with prescribed mean curvature. In Section~\ref{sec:lipregsurf} we describe Euclidean Lipschitz surfaces that are intrinsic regular graphs. The main result, Theorem~\ref{th:main}, is proven in Section~\ref{sec:main}. The consequences on the mean curvature, higher regularity and a geometric characterization for characteristic curves will appear in Section~\ref{sec:mc}.


\section{Preliminaries}\label{sec:preliminaries}

\subsection{Contact sub-Riemannian manifolds}\label{sec:2.1}

Let $M$ be a $3$-dimensional smooth manifold $M$ equipped with a contact form $\omega$ and a sub-Riemannian metric $g_\mathcal{H}$ defined on its \emph{horizontal distribution} $\mathcal{H}:=\text{ker}(\omega)$.  By definition, $d\omega|_{\mathcal{H}}$ is non-degenerate. We shall refer to $(M,\omega,g_\mathcal{H})$ as a \emph{$3$-dimensional contact sub-Riemannian manifold}. It is well-known that $\omega\wedge d\omega$ is an orientation form in $M$. Since
\[
d\omega(X,Y)=X(\omega(Y))-Y(\omega(X))-\omega([X,Y]),
\]
the horizontal distribution $\mathcal{H}$ is completely non-integrable. 
\begin{definition}\label{def:horizontal} A vector field $X$ is called \emph{horizontal} if $X\in \hhh$ . A \emph{horizontal curve} is a $C^1$ curve whose tangent vector lies in the horizontal distribution.
\end{definition}
The \emph{Reeb vector field} $T$ in $M$ is the only one satisfying
\begin{equation}
\label{eq:reeb}
\omega(T)=1,\qquad \mathcal{L}_T\omega=0,
\end{equation}
where $\mathcal{L}$ is the Lie derivative in $M$.

A canonical contact structure in Euclidean $3$-space $\rr^3$ with coordinates $(x,y,t)$ is given by the contact one-form $\omega_0:=dt-ydx$. The associated contact manifold is the Heisenberg group $\hh^1$. A basis of the horizontal left invariant vector is 
\begin{equation}\label{X_0andY_0}
X_0=\frac{\partial}{\partial x} +y\frac{\partial}{\partial t}, \quad Y_0=\frac{\partial}{\partial y},
\end{equation}
while the Reeb vector field is 
\begin{equation}\label{X_0andY_0}
T_0=\frac{\partial}{\partial t}.
\end{equation}

 Darboux's Theorem \cite[Thm.~3.1]{MR1874240} (see also \cite{MR2979606}) implies that, given a point $p\in M$, there exists an open neighborhood $U$ of $p$ and a diffeomorphism $\phi_p$ from $U$ into an open set of $\rr^{3}$ satisfying $\phi_p^*\omega_0=\omega$. Such a local chart will be called a \emph{Darboux chart}. Composing the map $\phi_p$ with a contact transformation of $\hh^1$ also provides a Darboux chart. This implies we can prescribe the image of a point $p\in U$ and the image of a horizontal direction in $T_pM$.

The metric $g_\mathcal{H}$ can be uniquely extended to a Riemannian metric $g$ on $M$ by requiring $T$ to be a unit vector orthogonal to $\mathcal{H}$. The Levi-Civita connection associated to $g$ will be denoted by $D$. 


The Riemannian volume element in $(M,g)$ will be denoted by $dM$ and coincides with Popp's measure \cite[\S~10.6]{MR1867362}, \cite{MR3108867}. The volume of a set $E\subset M$ with respect to the Riemannian metric $g$ will be denoted by $|E|$. 

\subsection{Torsion and the sub-Riemannian connection}

The following is taken from \cite[\S~3.1.2]{Gaphd}. In a contact sub-Riemannian manifold, we can decompose the endomorphism $X\in TM\rightarrow D_X T$ into its antisymmetric and symmetric parts, which we will denoted by $J$ and $\tau$, respectively,
\begin{equation}\label{eq:jtau}
\begin{split}
2\escpr{J(X),Y}&=\escpr{D_X T,Y}-\escpr{D_Y T,X},\\
2\escpr{\tau(X),Y}&=\escpr{D_X T,Y}+\escpr{D_Y T,X}.
\end{split}
\end{equation}
Observe that $J(X),\tau(X)\in\hhh$ for any vector field $X$, and that $J(T)=\tau(T)=0$. Also note that
\begin{equation}\label{eq:J}
2\escpr{J(X),Y}=-\escpr{[X,Y],T}, \qquad X,Y\in\mathcal{H}.
\end{equation}
We will call $\tau$ the \emph{$($contact$)$ sub-Riemannian torsion}. We note that our $J$ differs from the one defined in \cite[(2.4)]{MR3044134} by the constant $g([X,Y],T)$, but plays the same geometric role and can be easily generalized to higher dimensions, \cite[\S~3.1.2]{Gaphd}. 

Now we define the \emph{$($contact$)$ sub-Riemannian connection} $\nabla$ as the unique metric connection, \cite[eq.~(I.5.3)]{MR2229062}, with torsion tensor $\tor(X,Y)=\nabla_XY-\nabla_YX-[X,Y]$ given by
\begin{equation}\label{def:tor}
\tor(X,Y):=\escpr{X,T}\,\tau(Y)-\escpr{Y,T}\,\tau(X)+2\escpr{J(X),Y}\,T.
\end{equation}
From \eqref{def:tor} and Koszul formula for the connection $\nabla$ it follows that $\nabla T\equiv 0$.


The standard orientation of $M$ is given by the $3$-form $\omega\wedge d\omega$. If $X_p$ is horizontal, then the basis $\{X_p,J(X_p),T_p\}$ is positively oriented, in fact $(\omega\wedge d\omega)(X,J(X),T)$ and $d\omega(X,J(X))$ have the same sign and
\[
d\omega(X,J(X))=-\omega([X,J(X)])=-g([X,J(X)],T)=g(\tor(X,J(X)),T)=2\,g(J(X),J(X))>0.
\]

\subsection{Perimeter and Lipschitz surfaces}

A set $E\subset M$ has \emph{locally finite perimeter} if, for any bounded open set $B\subset M$, we have
\[
P(E,B):=\sup\bigg\{\int_{E\,\cap\,B}\divv U\,dM: U\ \text{horizontal},\ \text{supp}(U)\subset B, ||U||_\infty\le 1\bigg\}<+\infty.
\]
The quantity $P(E,B)$ is called the \emph{relative perimeter} of $E$ in $B$.

Assuming $\ptl E$ is a Lipschitz surface and using the Divergence Theorem for Lipschitz boundaries, it is easy to show that the relative perimeter of $E$ in a bounded open set $B\subset M$ coincides with the sub-Riemannian area of $\ptl E\cap B$, given by
\begin{equation}
\label{eq:areaC1}
A(\ptl E\cap B)=\int_{\ptl E\,\cap\, B}|N_h|\,d(\ptl E).
\end{equation}
Here $N$ is the Riemannian unit normal to $\ptl E$, $N_h$ is the horizontal projection of $N$ to the horizontal distribution, and $d(\ptl E)$ is the Riemannian area measure, all computed with respect the Riemannian metric $g$, see \cite{MR2312336}. 

\subsection{Sets with prescribed mean curvature}
\label{sec:pmc}
Let $\Om$ a domain in $M$ and let $f:\Om\to\rr$ a given function. We shall say that a set of locally finite perimeter $E\subset\Om$ has \emph{prescribed mean curvature $f$ on $\Om$} if, for any bounded open set $B\subset\Om$, $E$ is a critical point of the functional
\begin{equation}
\label{eq:a-hv}
P(E,B)-\int_{E\cap B} f,
\end{equation}
where $P(E,B)$ is the relative perimeter of $E$ in $B$, and the integral on $E\cap B$ is computed with respect to the canonical Popp's measure on $M$. The admissible variations for this problem are the flows induced by vector fields with compact support in $B$.

Let $\Sg=\ptl E$ be a Lipschitz surface in $\Om$, we say that $\Sg$ has \emph{prescribed mean curvature} $f$ if it is a critical point of the functional
\begin{equation}\label{eq:prescribedfunctional}
A(\Sg\cap B) -\int_{E\cap B} f,
\end{equation}
for any bounded open set $B\subset\Om$.

These definitions are the counterpart of the ones in the Euclidean setting, \cite[(12.32) and Remark~17.11]{MR2976521}. We note that they are been introduced in the sub-Riemannian setting in \cite{GalRit15} . 

\begin{remark}If $E$ is a critical point of the relative perimeter $P(E,B)$ in any bounded open set $B\subset\Om$, then $E$ has vanishing prescribed mean curvature.
\end{remark}

\section{A characterization of Euclidean Lipschitz regular surfaces}\label{sec:lipregsurf}

The notion of \emph{ regular surface} in the sub-Riemannian geometry has been introduced by Franchi, Serapioni and Serra Cassano, \cite{MR1871966, MR1984849}.  They defined a regular surface $\Sg$ as the zero level set of a function $f$, whose horizontal gradient $\nabla^h(f)$ never vanishes. The \emph{horizontal gradient} is defined as
\[
\nabla^h(f):=X_1(f)\,X_1+X_2(f)\,X_2,
\]
where $\{X_1,X_2\}$ is an orthonormal basis of $\hhh$. We remark that the function $f$ defining a regular surface $\Sg$ can be merely $\frac{1}{2}$-Hölder continuous from the Euclidean viewpoint and there exists regular surfaces with fractional Euclidean Hausdorff measure, \cite{MR2124590}.
This notion can not be applied to Euclidean Lipschitz surfaces since $\nabla^h(f)$ is not defined at every point. On other hand it is well-known that a regular surface can be locally described as an \emph{intrinsic graph}, using an implicit function theorem, see \cite{MR1871966, MR2313532, MR2263950}.

Since we are mainly focused on local regularity property, it is not restrictive to describe an Euclidean Lipschitz surface $\Sg$ as follows. 
Given a point $p\in\Sg$, we consider a Darboux chart $(U_p,\phi_p)$ such that $\phi_p(p)=0$. The metric $g_{\mathcal{H}}$ can be described in this local chart by the matrix of smooth functions
\[
G=\begin{pmatrix}
g_{11} & g_{12}
\\
g_{21} & g_{22} 
\end{pmatrix}
=
\begin{pmatrix}
g(X,X) & g(X,Y)
\\
g(Y,X) & g(Y,Y)
\end{pmatrix}.
\]
Under the assumption that $\Sg$ is an Euclidean Lipschitz surface, we can define $p$ a \emph{regular point} in $\Sg$ if there exists (eventually after a Euclidean rotation around the $t$-axis) an open neighborhood $B\cap\Sg$ that is the graph $G_u$ of a Euclidean Lipschitz function $u:D\to\rr$ defined on a domain $D$ in the vertical plane $y=0$. A point $p\in\Sg$ is \emph{singular} when it is not regular and we denote by $\Sg_0$ the set of the singular points of $\Sg$. We remark that an analogous notion was used in 3-dimensional Lie groups in \cite{MR2831583}.

Now we introduce some geometric quantities of the graph $G_u$. We have that $G_u$ can be parameterized by the map $u:D\to\rr^3$ defined by
\[
f_u(x,t):=(x,u(x,t),t),\qquad (x,t)\in D.
\]
The tangent plane is defined in a.e. point in $G_u$ and, when it exists, it is generated by the vectors $E_1,E_2$ obtained in the following way
\begin{align*}
\tfrac{\ptl}{\ptl x}&\mapsto E_1=(1,u_x,-u-xu_x)=X+u_xY-uT,
\\
\tfrac{\ptl}{\ptl t}&\mapsto E_2=(0,u_t,1-xu_t)=u_tY+T.
\end{align*}
Then the characteristic direction is given by $Z=\widetilde{Z}/|\widetilde{Z}|$, where
\[
\widetilde{Z}=X+(u_x+uu_t)Y.
\]
We observe that the vector $\widetilde{Z}$ is well-defined and continuous in an Euclidean Lipschitz regular  surface since $G_u$ is  an intrinsic graph, see \cite{MR2223801, MR2600502}. 

If $\ga(s)=(x(s),t(s))$ is a $C^1$ curve in $D$, then the lifted curve in $G_u$ is
\[
\Ga(s)=(x(s),u(x(s),t(s)),t(s)-x(s)u(x(s),t(s)))\subset G_u.
\]
We stress that even if $\Ga$ is Lipschitz and the tangent vector
\[
\Ga'(s)=x'\,(X+u_xY-uT)+t'\,(u_tY+T)=x'X+(x'u_x+t'u_t)Y+(t'-ux')T
\]
exists for a.e. $s$, the vertical component of $\Ga'(s)$ is well defined in all point of $G_u$. 
In particular, horizontal curves in $G_u$ satisfy the ordinary differential equation $t'=ux'$. 

\begin{remark} Since $u\in Lip(D)$, we have uniqueness of characteristic curves through any given point in $G_u$.
\end{remark}

Finally, from \cite[(4.2)]{GalRit15} and the approximation technique in \cite[Theorem~1.7]{MR3168633}, we obtain
\begin{equation}
\label{eq:areagraph}
A(G_u)=\int_D \big(g_{22}(f_u)(u_x+uu_t)^2+2g_{12}(f_u)(u_x+uu_t)+g_{11}(f_u)\big)^{1/2}dxdt.
\end{equation}
We remark that the functions $g_{ij}$ are bounded on the Darboux chart $U$.

\section{Regularity of prescribed mean curvature sets with Lipschitz boundary}
\label{sec:main}

First we show a characterization on the area-stationary property of $G_u$

\begin{proposition}
\label{prop:1stvariation}
Let $G_u$ be the graph of an Euclidean Lipschitz function $u:D\to\rr$, defined on a Darboux chart by   the parametrization 
\[
f_u(x,t):=(x,u(x,t),t),\qquad (x,t)\in D.
\]
We suppose that the area of $G_u$ is defined by \eqref{eq:areagraph} and $G_u$ has prescribed mean curvature $f$. Then  
\begin{equation}\label{eq:1stvariation smooth test function}
\int_D \big(Kv+M\,(v_x+uv_t+vu_t\big)\,dxdt=0
\end{equation}
for any $v\in C^1_0(D)$, where the functions $K$, $K_1$ and $M$ are given by
\[
K=K_1-f\,\det(G),
\]
\[
K_1=\frac{\frac{\partial g_{22}}{\partial y}\cdot(u_x+uu_t)^2+2 \frac{\partial g_{12}}{\partial y}\cdot(u_x+uu_t)+\frac{\partial g_{11}}{\partial y}}{2(g_{22}\cdot(u_x+uu_t)^2+2g_{12}(u_x+uu_t)+g_{11})^{1/2}},
\]
and
\begin{equation}\label{def:M}
M=\frac{g_{22}(u_x+uu_t)+g_{12}}{(g_{22}(u_x+uu_t)^2+2g_{12}(u_x+uu_t)+g_{11})^{1/2}}.
\end{equation}
\end{proposition}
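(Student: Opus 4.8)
The plan is to obtain \eqref{eq:1stvariation smooth test function} as the Euler--Lagrange identity of the functional \eqref{eq:prescribedfunctional}, by computing its first variation along the family of graphs $G_{u_\eps}$ with $u_\eps:=u+\eps v$, $v\in C^1_0(D)$, and imposing that it vanish. Since $v$ is compactly supported, $u_\eps$ remains a Euclidean Lipschitz graph for small $\eps$, and the variation is induced by a compactly supported vector field; because the graph map carries the third Euclidean coordinate $t-xu$ (as the tangent fields $E_1,E_2$ and the lift $\Ga$ show), its initial velocity is $\partial_\eps f_{u_\eps}|_{\eps=0}=v\,(\partial_y-x\,\partial_t)$, so the variation is admissible in the sense of Section~\ref{sec:pmc}. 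It then suffices to differentiate the two summands of \eqref{eq:prescribedfunctional} separately at $\eps=0$. The genuine difficulty, taken up at the end, is that $u$ is only Lipschitz, so the integrand of \eqref{eq:areagraph} is merely bounded and measurable and the exchange of $\tfrac{d}{d\eps}$ with $\int_D$ must be justified rather than assumed.

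For the area term set $p:=u_x+uu_t$ and $Q:=g_{22}(f_u)\,p^2+2g_{12}(f_u)\,p+g_{11}(f_u)$, so that \eqref{eq:areagraph} reads $A(G_u)=\int_D Q^{1/2}\,dx\,dt$. Replacing $u$ by $u_\eps$ introduces two $\eps$-dependencies: the argument $y=u_\eps$ inside the metric coefficients, with $\partial_\eps\big[g_{ij}(f_{u_\eps})\big]\big|_0=\tfrac{\partial g_{ij}}{\partial y}\,v$, and the characteristic quantity $p_\eps=(u_\eps)_x+u_\eps(u_\eps)_t$, with $\partial_\eps p_\eps|_0=v_x+uv_t+vu_t$. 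Applying the chain rule to $Q^{1/2}$ and dividing by $2Q^{1/2}$, the metric-coefficient terms assemble exactly into $K_1 v$, while the $p$-derivative term has coefficient $(g_{22}p+g_{12})/Q^{1/2}=M$ and produces $M\,(v_x+uv_t+vu_t)$; hence $\partial_\eps A(G_{u_\eps})|_0=\int_D\big(K_1 v+M(v_x+uv_t+vu_t)\big)\,dx\,dt$. Note that $v_x+uv_t+vu_t=\partial_x v+\partial_t(uv)$ is in divergence form, which is precisely what makes \eqref{eq:1stvariation smooth test function} a weak Euler--Lagrange equation after an eventual integration by parts.

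For the volume term I use the first variation of $f$-weighted enclosed volume, $\partial_\eps\int_{E_\eps}f\,dM\big|_0=\int_{\Sg}f\,\escpr{\partial_\eps f_{u_\eps},N}\,d\sigma$, and evaluate it in the Darboux chart as $\int_D f\,dM(\partial_\eps f_{u_\eps},E_1,E_2)\,dx\,dt$. Expanding this $3\times3$ determinant of $\{v(\partial_y-x\partial_t),E_1,E_2\}$ weighted by the density of Popp's measure, the entries carrying the factor $x$ cancel and the computation collapses to $-\int_D f\,\det(G)\,v\,dx\,dt$, supplying precisely the term $-f\det(G)$ in $K=K_1-f\det(G)$. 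Adding the two variations and requiring criticality for every $v\in C^1_0(D)$ gives \eqref{eq:1stvariation smooth test function}. The main obstacle is the analytic justification flagged above: one verifies that $Q$ is bounded below by a positive constant on $\supp(v)$ (positive definiteness of $G$), that the $g_{ij}$ and $\partial_y g_{ij}$ are bounded on the chart, and that $u_x,u_t\in L^\infty$ while $v\in C^1_0(D)$; these bounds make $\partial_\eps Q^{1/2}$ uniformly bounded in $\eps$ on $\supp(v)$, so dominated convergence legitimizes differentiation under the integral sign. A secondary but essential point is that the intrinsic graph moves both the $y$- and $t$-coordinates, so the correct velocity $v(\partial_y-x\partial_t)$---not a naive vertical field---must be used to obtain the clean determinant above.
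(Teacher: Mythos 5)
Your argument is essentially the paper's own proof: differentiate the integrand of \eqref{eq:areagraph} along $u+\eps v$ to produce $K_1$ and $M$, compute the variation of the $f$-weighted volume as a determinant against the velocity of the deformation, and combine; you in fact supply the dominated-convergence justification and the explicit determinant expansion (with the correct velocity $v(\partial_y-x\partial_t)$) that the paper leaves implicit with ``it is easy to see.'' The one point to recheck is the sign bookkeeping in the volume term: as written, $\partial_\eps\int_{E_\eps}f\big|_{\eps=0}=-\int_D f\det(G)\,v\,dxdt$ combined with the minus sign already present in the functional \eqref{eq:prescribedfunctional} would yield $K=K_1+f\det(G)$, so one of the two minus signs (the orientation of the normal in your flux formula, or which side of $G_u$ is taken to be $E$) must be fixed so as to land on $K=K_1-f\det(G)$, matching the paper's $\frac{d}{ds}\big|_{s=0}\int_{\text{subgraph}\,G_{u+sv}}f=+\int_D f\det(G)\,v\,dxdt$.
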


\begin{proof} First we prove a first variation formula for $A(G_u)$ for variations of $G_u$ given by $G_{u+sv}$, where  $v\in C^1_0(D)$ and $s$ is a small parameter.  Denoting $w=u+sv$, we have
\[
\frac{d}{ds}\bigg|_{s=0}A(G_{w})= \int_D \frac{d}{ds}\bigg|_{s=0} \big(g_{22}(f_w)(w_x+ww_t)^2+2g_{12}(f_w)(w_x+ww_t)+g_{11}(f_w)\big)^{1/2}dxdt.
\]
Taking into account that
\[
\frac{d}{ds} g_{ij}(f_w)=\frac{\partial}{\partial y}g_{ij} (f_w)\cdot  \frac{d}{ds}w=  \frac{\partial}{\partial y}g_{ij} (f_w)\cdot v
\]
and
\[
\frac{d}{ds} (w_x+ww_t)= v_x+uv_t+vu_t,
\]
we conclude 
\[
\frac{d}{ds}\bigg|_{s=0}A(G_{w})=\int_D \big(K_1v+M\,(v_x+uv_t+vu_t\big)\,dxdt=0.
\]
Here $K$ and $M$ are defined in the statement of the proposition. On the other hand it is easy to see that
\[
\frac{d}{ds}\bigg|_{s=0} \int_{\text{subgraph}\,G_{u+sv}} f=\int_D f\,\det(G)\,v\,dxdt,
\]
which conclude the proof.
\end{proof}

Now we are able to prove the main regularity result

\begin{theorem}
\label{th:main}
Let $M$ be a $3$-dimensional contact sub-Riemannian manifold, $\Om\subset M$ a domain, and $E\subset\Om$ a set of prescribed mean curvature $f\in C(\Om)$ with Euclidean Lipschitz boundary $\partial E=\Sg$. If $\Sg$ is locally a Lipschitz regular graph, then the characteristic curves are of class $C^2$.
\end{theorem}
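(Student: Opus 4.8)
The plan is to reduce the regularity statement to a purely local, one-variable ODE analysis of the characteristic curves, starting from the weak first variation formula \eqref{eq:1stvariation smooth test function} established in Proposition~\ref{prop:1stvariation}. Working in a Darboux chart near a regular point $p\in\Sg-\Sg_0$, we describe $\Sg$ locally as the intrinsic graph $G_u$ of a Euclidean Lipschitz function $u:D\to\rr$, so that the characteristic direction is $Z=\widetilde{Z}/|\widetilde{Z}|$ with $\widetilde{Z}=X+(u_x+uu_t)Y$. The key observation is that the differential operator appearing in \eqref{eq:1stvariation smooth test function}, namely $v\mapsto v_x+uv_t+vu_t=\partial_x(v)+\partial_t(uv)$, is exactly differentiation along the characteristic direction (up to the total $t$-derivative of $uv$). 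I would therefore introduce the intrinsic coordinate along the characteristic curves: solve the ODE $t'=ux'$ whose integral curves are the characteristics, and use $(s,\xi)$ adapted coordinates where $s$ parametrizes each characteristic and $\xi$ labels the curve.

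The central analytic step is to rewrite \eqref{eq:1stvariation smooth test function} as a distributional equation for the single scalar quantity $M$ defined in \eqref{def:M}. Integrating by parts the term $M(v_x+uv_t+vu_t)$ and using that $v$ ranges over all of $C^1_0(D)$, the identity says that the distributional derivative of $M$ along the characteristic vector field equals the continuous function $K=K_1-f\det(G)$; concretely, in the $(s,\xi)$ coordinates one obtains $\partial_s M = -K$ (plus lower-order geometric terms coming from $K_1$, which involve only $u$, the $g_{ij}$ and $u_x+uu_t$, hence are continuous functions of quantities already known to be continuous on $G_u$). Because $f\in C(\Om)$ and the metric coefficients $g_{ij}$ are smooth and bounded on the chart, the right-hand side is a continuous function along each characteristic. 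This forces $M$ to be a $C^1$ function of the arclength-type parameter $s$ along each characteristic curve.

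From the $C^1$ regularity of $M$ along characteristics I would recover the regularity of the curves themselves. The quantity $p:=u_x+uu_t$ is an algebraic function of $M$ through \eqref{def:M}: solving \eqref{def:M} for $p$ expresses $u_x+uu_t$ as a smooth function of $M$ and of the $g_{ij}$ evaluated along the curve, and this is precisely the slope that determines the characteristic direction $\widetilde{Z}=X+pY$. A characteristic curve $\Ga(s)$ has velocity governed by $p$; its projection $\ga(s)=(x(s),t(s))$ satisfies $x'=\text{const}\cdot|\widetilde Z|^{-1}$ and $t'=ux'$, while the evolution of the direction is controlled by $p(s)$. Since $p$ is now $C^1$ in $s$ (being a smooth function of the $C^1$ function $M$ and of the continuous geometric data), the defining ODE system for $\Ga$ has a $C^1$ right-hand side, and hence by the standard ODE regularity bootstrap the characteristic curve is of class $C^2$. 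This yields the theorem on $\Sg-\Sg_0$, which is the assertion.

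The main obstacle I anticipate is the first, distributional step: justifying that the weak equation \eqref{eq:1stvariation smooth test function}, valid only for $v\in C^1_0(D)$ and with $u$ merely Lipschitz, genuinely produces a \emph{continuous} (indeed $C^1$) derivative of $M$ along characteristics, rather than just a $BV$ or $L^\infty$ statement. The difficulty is that $u$ is only Lipschitz, so $M$ and $K_1$ a priori live in $L^\infty$, and one must exploit the special structure of the characteristic operator $\partial_x+u\partial_t$ — which is intrinsic and does not differentiate $u$ transversally — to transfer the equation onto individual characteristic curves without losing regularity. The rigorous passage to the $(s,\xi)$ coordinates and the a.e.-to-everywhere upgrade for the continuous representative of $M$ (using uniqueness of characteristics through each point, noted in the Remark after \eqref{eq:areagraph}) is where the real work lies; the subsequent bootstrap to $C^2$ is then routine once $M$ is known to be $C^1$ along the foliation.
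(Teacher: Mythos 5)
Your outline coincides with the paper's own proof: work in a Darboux chart, take the weak first variation \eqref{eq:1stvariation smooth test function}, pass to coordinates $(\xi,\eps)$ adapted to the foliation by solutions of $t'=u(s,t)$, reduce to the one-dimensional weak identity $\int\big(K(0,\xi)\psi+M(0,\xi)\psi'\big)\,d\xi=0$ on a single characteristic, conclude that $M=\escpr{Z,Y}$ is $C^1$ along the curve, and then recover the full tangent direction (the paper via \eqref{eq:ZX} for $\escpr{Z,X}$, you by solving \eqref{def:M} for $u_x+uu_t$; these are equivalent) to get $C^2$ regularity of the characteristic.

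The one genuine gap is precisely the step you flag as ``where the real work lies'' and then do not carry out. A naive change of variables does \emph{not} hand you a continuous right-hand side for $\partial_\xi M$: the transformed integrand is $K\tilde{\varphi}+M\big(\partial_\xi\tilde{\varphi}+2\tilde{\varphi}\tilde{u}_t\big)$ multiplied by the Jacobian $\partial t_\eps/\partial\eps$, and both $\tilde{u}_t$ and the Jacobian are merely $L^\infty$ resp.\ Lipschitz, so at best you would get $M\in W^{1,\infty}$ along the curve, not $C^1$. Your divergence-form observation $v_x+uv_t+vu_t=\partial_x v+\partial_t(uv)$ is indeed the structural reason the obstruction can be removed, but it must be implemented without ever differentiating $u$ or the Jacobian transversally. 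The paper does this by inserting the renormalized test function $\tilde{\zeta}=\tilde{\varphi}\,h/(t_{\eps+h}-t_\eps)$ and letting $h\to 0$: the difference quotient plays the role of $\big(\partial t_\eps/\partial\eps\big)^{-1}$, and its $\xi$-derivative produces exactly the term that cancels $2\tilde{\varphi}\tilde{u}_t$, leaving the clean identity with only $K$ and $M\,\partial_\xi\tilde{\varphi}$. Without this (or an equivalent renormalization/commutator argument), and without the final du Bois-Reymond step (the paper cites \cite[Lemma~4.2]{GalRit15}) upgrading the weak one-dimensional identity to $M\in C^1$, the proposal does not close. Also note that concentrating the test function on the single characteristic $\eps=0$ via $\eta_\rho(\eps)\psi(\xi)$ requires the continuity of $K$ and $M$ in $\eps$ (which holds because $\widetilde{Z}$ is continuous on the intrinsic graph), a point your ``a.e.-to-everywhere'' remark gestures at but should be made explicit.
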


\begin{proof} Given any point $p\in \Sg-\Sg_0$, it is sufficient show the result in the Darboux chart $(U,\phi)$ introduced in Section~\ref{sec:lipregsurf}. We consider
\[
Z=\frac{X+(u_x+uu_t)\,Y}{(g_{22}(u_x+uu_t)^2+2g_{12}(u_x+uu_t)+g_{11})^{1/2}},
\]
the tangent vector to an horizontal curve in $\G_u$. The function $M$ introduced in \eqref{def:M} coincides with $g(Z,Y)\circ f_u$. Then
\[
1=|Z|^2=\det(G)^{-1}\big(g_{22}\escpr{Z,X}^2-2g_{12}\escpr{Z,X}\escpr{Z,Y}+g_{11}\escpr{Z,Y}^2\big)
\]
and
\begin{equation}
\label{eq:ZX}
\escpr{Z,X}=\frac{g_{12}g(Z,Y)\pm (\det(G)(g_{22}-\escpr{Z,Y}^2))^{1/2}}{g_{22}}.
\end{equation}
By Schwarz's inequality $\escpr{Z,Y}^2\le \escpr{Y,Y}=g_{22}$. Inequality is strict since otherwise $Y$ and $Z$ would be collinear. Hence $\escpr{Z,X}$ has the same regularity as $\escpr{Z,Y}$ by \eqref{eq:ZX}.

Now we proceed as in the proof of \cite[Theorem~4.1]{GalRit15}, see also \cite[Theorem~3.5]{gr-aim}. Assume the point $p\in G_u$ corresponds to the point $(a,b)$ in the $xt$-plane. The  curve $s\mapsto (s,t(s))$ is (a reparameterization of the projection of) a characteristic curve if and only if the function $t(s)$ satisfies the ordinary differential equation $t'(s)=u(s,t(s))$. For $\eps$ small enough, we consider the solution $t_\eps$ of equation $t_\eps'(s)=u(s,t_\eps(s))$ with initial condition $t_\eps(a)=b+\eps$, and define $\ga_\eps(s):=(s,t_\eps(s))$, with $\ga=\ga_0$. We may assume that, for small enough $\eps$, the functions $t_\eps$ are defined in the interval $[a-r,a+r]$ for some $r>0$. 

We consider the parameterization
\[
F(\xi,\eps):= (\xi,t_\eps(\xi))=(s,t)
\]
near the characteristic curve through $(a,b)$. The jacobian of this parameterization is given by
\[
\det\begin{pmatrix}
1 & t'_\eps
\\
0 & \ptl t_\eps/\ptl \eps
\end{pmatrix}=\frac{\ptl t_\eps}{\ptl\eps},
\]
which is positive because of the choice of initial condition for $t_\eps$ and the fact that the curves $\ga_\eps(s)$ foliate a neighborhood of $(a,b)$. We remark that $t_\eps$ is a Lipschitz function with respect to $\eps$, \cite[Theorem~2.9]{MR2961944}, and the jacobian of the parametrization is well-defined. Using the implicit function theorem for Lipschitz functions, \cite{MR0425047, MR2206097}, any function $\varphi$ can be considered as a function of the variables $(\xi,\eps)$ by making $\tilde{\varphi}(\xi,\eps):=\varphi(\xi,t_\eps(\xi))$. Changing variables, and assuming the support of $\varphi$ is contained in a sufficiently small neighborhood of $(a,b)$, we can express the integral \eqref{eq:1stvariation smooth test function} as
\[
\int_{I}\bigg\{\int_{a-r}^{a+r}\bigg(K\tilde{\varphi}+M\,\bigg(\frac{\ptl\tilde{\varphi}}{\ptl\xi}+2\tilde{\varphi} \tilde{u}_t\bigg)\bigg)\,\frac{\ptl t_\eps}{\ptl\eps}\,d\xi\bigg\}\,d\eps,
\]
where $I$ is a small interval containing $0$. Instead of $\tilde{\varphi}$, we can consider the function $\tilde{\zeta}=\tilde{\varphi} h /( t_{\eps+h}-t_\eps)$, where $h$ is a sufficiently small real parameter. We get that 
\[
\frac{ \ptl \tilde{\zeta}}{\ptl\xi}=\frac{\ptl\tilde{\varphi}}{\ptl\xi}\cdot \frac{h}{t_{\eps+h}-t_\eps}-2\tilde{\varphi}\cdot \frac{\tilde{u}(\xi, \eps+h)-\tilde{u}(\xi,\eps)}{t_{\eps+h}-t_\eps}\cdot \frac{h}{t_{\eps+h}-t_\eps}
\]
tends a.e. to 
\[
\frac{\ptl\tilde{\varphi}/\ptl\xi}{\ptl t_\eps/\ptl\eps}-\frac{2\tilde{\varphi}\tilde{u}_t}{\ptl t_\eps/\ptl\eps},
\]
when $h\rightarrow 0$. So using that $G_u$ is area-stationary we have that
\[
\int_{I}\bigg\{\int_{a-r}^{a+r} \frac{h}{t_{\eps+h}-t_\eps}\bigg(K\,  \tilde{\varphi}+M\,\bigg(\frac{\ptl\tilde{\varphi}}{\ptl\xi}\cdot +2  \tilde{\varphi}\cdot \bigg(\tilde{u}_t-\frac{\tilde{u}(\xi, \eps+h)-\tilde{u}(\xi,\eps)}{t_{\eps+h}-t_\eps}\bigg)\bigg)\bigg)\,\frac{\ptl t_\eps}{\ptl\eps}\,d\xi\bigg\}\,d\eps
\]
vanishes. Furthermore, letting $h\rightarrow 0$ we conclude
\[
\int_{I}\bigg\{\int_{a-r}^{a+r}\bigg(K\tilde{\varphi}+M\,\frac{\ptl\tilde{\varphi}}{\ptl\xi}\bigg)\,d\xi\bigg\}\,d\eps=0.
\]


Let now $\eta:\rr\to\rr$ be a positive function with compact support in the interval $I$ and consider the family $\eta_\rho(x):=\rho^{-1}\eta(x/\rho)$. Inserting a test function of the form $\eta_\rho(\eps)\psi(\xi)$, where $\psi$ is a $C^\infty$ function with compact support in $(a-r,a+r)$, making $\rho\to 0$, and using that $G_u$ is area-stationary we obtain
\[
\int_{a-r}^{a+r} \big(K(0,\xi)\,\psi(\xi)+M(0,\xi)\,\psi'(\xi)\big)\,d\xi=0
\]
for any $\psi\in C^\infty_0((a-r,a+r))$. By \cite[Lemma~4.2]{GalRit15}, the function $M(0,\xi)$, which is the restriction of $\escpr{Z,Y}$ to the characteristic curve, is a $C^1$ function on the curve. By equation \eqref{eq:ZX}, the restriction of $\escpr{Z,X}$ to the characteristic curve is also $C^1$. This proves that horizontal curves are of class $C^2$.
\end{proof}

\section{The mean curvature and a characterization of characteristic curves}
\label{sec:mc}

In this section we introduce the definition of the mean curvature of a  regular Lipschitz boundary and present some consequences. This results partially follows \cite[\S~4]{GalRit15} and we will omit some proofs.

Given a Lipschitz surface $\Sg\subset M$ such that the  characteristic curves are $C^2$-smooth in $\Sg\setminus\Sg_0$, we define the \emph{mean curvature} of $\Sg$ at $p\in \Sg\setminus\Sg_0$ by
\begin{equation}
\label{eq:Hstrongsense}
H(p):= g(\n_Z Z,\nu_h)(p).
\end{equation}
This is the standard definition of mean curvature for Euclidean $C^2$ surfaces, see e.g. \cite[(2.1)]{MR2165405}, \cite[(4.8)]{MR2435652} and \cite[(1.4)]{MR2354992}. Using the regularity  Theorem~\ref{th:main} we get

\begin{proposition}
\label{prop:1stareaStrong}
Let $\Omega\subset M$ be a domain and $E\subset \Omega$ a set of prescribed mean curvature $f\in C(\Omega)$ with Lipschitz boundary $\Sg$ with $H\in L^1_{loc}(\Sg)$. Then the first variation of the functional \eqref{eq:prescribedfunctional} induced by an horizontal vector field $U\in C^1_0(\Omega)$ is given by 
\begin{equation}
\label{eq:1stvariationprescribedfunctional}
\begin{split}
\int_{\Sg}H\, \mnh\,g(U,\nu_h) \,d\Sg-\int_\Sg f \,\mnh\,g(U,\nu_h)\,d\Sg.
\end{split}
\end{equation}
\end{proposition}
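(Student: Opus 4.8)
The goal is to compute the first variation of the functional \eqref{eq:prescribedfunctional} under a flow induced by a horizontal vector field $U\in C^1_0(\Om)$ and to show it equals \eqref{eq:1stvariationprescribedfunctional}, thereby identifying the mean curvature $H$ defined pointwise in \eqref{eq:Hstrongsense} with the quantity governing the variational problem. The plan is to reduce the computation to the local Darboux chart of Section~\ref{sec:lipregsurf}, where $\Sg$ is a regular intrinsic graph $G_u$ and where, thanks to Theorem~\ref{th:main}, the characteristic curves are $C^2$ and the vector field $Z$ is well defined and continuous. Because the integrand of \eqref{eq:1stvariationprescribedfunctional} is tested only against $g(U,\nu_h)$, I would first record that the normal part of the variation is what matters and that horizontal fields $U$ generate admissible compactly supported flows.

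First I would write the first variation of the area term $A(\Sg\cap B)$ under the flow of $U$. Following \cite[\S~4]{GalRit15}, the natural step is to foliate a neighborhood of a regular point by the $C^2$ characteristic curves and to integrate along them, splitting the surface measure $d\Sg$ into the characteristic direction $Z$ and a transverse parameter. Along each characteristic curve one integrates by parts in the $Z$-direction; the boundary terms vanish by the compact support of $U$, and the interior term produces $g(\n_Z Z,\nu_h)=H$ paired with $\mnh\,g(U,\nu_h)$. This is exactly the classical computation for $C^2$ surfaces, and the point of the proposition is that Theorem~\ref{th:main} supplies just enough regularity ($Z$ is $C^1$ along the curve, so $\n_Z Z$ exists) to run it rigorously on a merely Lipschitz surface. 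The hypothesis $H\in L^1_{loc}(\Sg)$ guarantees the resulting integral is finite.

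Second I would handle the prescribed-curvature term $-\int_{E\cap B}f$. Its first variation under the flow of $U$ is a standard volume derivative: differentiating the characteristic function of the transported region gives $-\int_\Sg f\,\escpr{U,N}\,d\Sg$, and since $U$ is horizontal only the horizontal component of $N$ contributes, so $\escpr{U,N}=\escpr{U,N_h}=\mnh\,g(U,\nu_h)$ after writing $N_h=\mnh\,\nu_h$. This yields the second integral in \eqref{eq:1stvariationprescribedfunctional}. Adding the two pieces gives the stated formula. I would also note the consistency check already established in Proposition~\ref{prop:1stvariation}: in the graph coordinates the area first variation has integrand $M$ paired with the horizontal derivative of the test function, and integrating the $M$-term by parts along characteristics reproduces the $H$-term, so the two descriptions agree.

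The main obstacle is the area first variation in the low-regularity setting. On a genuinely $C^2$ surface the integration by parts producing $\n_Z Z$ is routine, but here $\Sg$ is only Euclidean Lipschitz and one must justify that the foliation by characteristic curves, the change of variables to the $(\xi,\eps)$ coordinates used in the proof of Theorem~\ref{th:main}, and the integration by parts along $Z$ are all valid with the regularity actually available — namely $Z\in C^0$, $M=g(Z,Y)$ of class $C^1$ along each curve by \cite[Lemma~4.2]{GalRit15}, and the transverse Lipschitz dependence of $t_\eps$ on $\eps$ from \cite[Theorem~2.9]{MR2961944}. I expect the cleanest route is to carry out the area variation directly in the graph coordinates as in Proposition~\ref{prop:1stvariation}, then transform to characteristic coordinates and integrate by parts in $\xi$ exactly as in the proof of Theorem~\ref{th:main}, invoking the $C^1$ regularity of $M$ along characteristics to control the boundary and interior terms, and finally recognize the interior term as $H\,\mnh\,g(U,\nu_h)$ through the pointwise identity $H=g(\n_Z Z,\nu_h)$.
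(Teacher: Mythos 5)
Your proposal is correct and follows essentially the same route as the paper: the paper simply cites the first variation formula for the sub-Riemannian area from \cite[\S~6]{MR3044134} together with a standard approximation argument (which is exactly the integration by parts along the $C^2$ characteristic curves you sketch, made possible by Theorem~\ref{th:main} and the hypothesis $H\in L^1_{loc}(\Sg)$), and handles the volume term by the classical formula \cite[17.8]{MR2976521}, using as you do that $U$ horizontal gives $\escpr{U,N}=\mnh\,g(U,\nu_h)$. No substantive difference.
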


\begin{proof}
We first observe that formula
\[
\frac{d}{ds}\bigg|_{s=0} A(\Sg_s)=-\int_{\Sg}H\, \mnh\,g(U,\nu_h) \,d\Sg
\] 
can be proved as in \cite[\S~6]{MR3044134} with a standard approximation argument. Note that even $N$ is defined a.e. on $\Sg$, $\mnh$ and $\nu_h$ are well defined on $\Sg-\Sg_0$. On the other hand, it is well-known that  
\[
\frac{d}{ds}\bigg|_{s=0} \bigg(\int_{\phi_s(\Omega)} f \bigg)= -\int_\Sg f \,\mnh\,g(U,\nu_h)\, d\Sg,
\]
see e.g. \cite[17.8]{MR2976521}.
\end{proof}

The following corollary is an immediate consequence of \eqref{eq:1stvariationprescribedfunctional}

\begin{corollary} Let $E\subset\Om$ be a set of prescribed mean curvature $f\in C(\Omega)$ with Lipschitz boundary $\Sg$ in a domain $\Om\subset M$. Assume $H\in L^1_{loc}(\Sg)$. Then $H(p)=f(p)$ for any $p\in\Sg\setminus\Sg_0$. 
\end{corollary}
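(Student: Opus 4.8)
The plan is to read the identity directly off the vanishing of the first variation \eqref{eq:1stvariationprescribedfunctional}. Since $E$ has prescribed mean curvature $f$, by definition it is a critical point of the functional \eqref{eq:prescribedfunctional} for every bounded open set $B\subset\Om$; in particular the first variation induced by an arbitrary horizontal field $U\in C^1_0(\Om)$ must vanish. Invoking Proposition~\ref{prop:1stareaStrong} (whose hypothesis $H\in L^1_{loc}(\Sg)$ is exactly the standing assumption of the corollary, and which is legitimate because Theorem~\ref{th:main} guarantees the characteristic curves are $C^2$ so that $H=g(\n_Z Z,\nu_h)$ is defined on $\Sg\setminus\Sg_0$), I would obtain
\[
\int_\Sg (H-f)\,\mnh\,g(U,\nu_h)\,d\Sg=0
\]
for every such $U$.

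Next I would localize to the regular part $\Sg\setminus\Sg_0$, which is open, and fix a point $p$ there. On this set $\mnh>0$, so the weight $\mnh$ is harmless. The key observation is that, as $U$ ranges over horizontal fields in $C^1_0$ supported near $p$, the scalar $g(U,\nu_h)$ realizes essentially any compactly supported test function on a neighborhood of $p$ in $\Sg\setminus\Sg_0$: one chooses a horizontal variation whose normal component equals a prescribed bump, which is possible because $\nu_h$ is continuous and nonvanishing on the regular part (a consequence of the continuity of $\widetilde Z$ established in Section~\ref{sec:lipregsurf}). By the fundamental lemma of the calculus of variations applied to the displayed integral, I conclude $(H-f)\,\mnh=0$ almost everywhere on $\Sg\setminus\Sg_0$, and dividing by $\mnh>0$ gives $H=f$ almost everywhere there.

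Finally I would upgrade the almost-everywhere identity to the pointwise statement. Since $f\in C(\Om)$ is continuous and, by Theorem~\ref{th:main}, the characteristic curves are of class $C^2$, the restriction of $H$ to each characteristic curve is continuous; by a Fubini-type argument along the characteristic foliation, $H=f$ almost everywhere on a curve together with continuity of both sides along that curve forces $H=f$ everywhere on it, and since the curves foliate $\Sg\setminus\Sg_0$ one obtains $H(p)=f(p)$ for every $p\in\Sg\setminus\Sg_0$.

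The main obstacle I anticipate lies in the middle step: justifying rigorously that the normal components $g(U,\nu_h)$ of admissible horizontal variations exhaust enough test functions to invoke the fundamental lemma, while keeping every variation supported strictly inside the regular set $\Sg\setminus\Sg_0$ where $\nu_h$, $\mnh$ and $H$ are all defined (recall $N$ itself exists only almost everywhere on the Lipschitz surface $\Sg$). Controlling the supports and using the continuity of $\nu_h$ on the regular part to attain the prescribed bumps is the delicate point; once this is settled, the passage from the integral identity to the pointwise equality $H=f$ is indeed \emph{immediate}.
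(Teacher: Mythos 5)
Your argument is correct and coincides with what the paper intends: the paper gives no written proof, stating only that the corollary is an immediate consequence of the first variation formula \eqref{eq:1stvariationprescribedfunctional}, and your route --- vanishing of the first variation, the resulting integral identity $\int_\Sg (H-f)\,\mnh\,g(U,\nu_h)\,d\Sg=0$, and the fundamental lemma of the calculus of variations on the regular part --- is exactly that argument, fleshed out. The extra care you take about realizing test functions as $g(U,\nu_h)$ and upgrading the a.e.\ identity to a pointwise one is reasonable diligence rather than a departure from the paper's approach.
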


Finally we can improve the regularity of Theorem~\ref{th:main} assuming an higher regularity for $f$. 

\begin{proposition}
\label{prop:C^k}
Let $E\subset\Om$ be a set of prescribed mean curvature $f\in C(\Omega)$ with $C^1$~boundary $\Sg$ in a domain $\Om\subset M$. Assume that $f$ is also $C^k$ in the $Z$-direction, $k\ge 1$. Then the characteristic curves of $\Sg$ are of class $C^{k+2}$ in $\Sg\setminus\Sg_0$. 
\end{proposition}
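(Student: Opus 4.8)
The plan is to run a bootstrap (elliptic-regularity-by-hand) argument along each characteristic curve, upgrading the regularity of the key function $\escpr{Z,Y}$ one derivative at a time using the identity $H=f$ together with the ordinary differential equation satisfied by the characteristic curve. The starting point is Theorem~\ref{th:main}, which already gives that the characteristic curves are $C^2$ and that $\escpr{Z,Y}$ (hence $\escpr{Z,X}$ by \eqref{eq:ZX}) is $C^1$ along each curve. The base case $k=1$ should therefore be almost immediate, and the real content is the inductive step.

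First I would write out the defining relation $H(p)=g(\n_Z Z,\nu_h)(p)=f(p)$ on $\Sg\setminus\Sg_0$, which by the Corollary above holds pointwise. The quantity $g(\n_Z Z,\nu_h)$, when expressed in the Darboux chart, is a smooth function of the chart data $g_{ij}$, their first derivatives, the values of $u$, and the components $\escpr{Z,X},\escpr{Z,Y}$ together with one derivative of these components \emph{along the characteristic direction} $Z$. Concretely, differentiating $Z$ in the direction $Z$ produces a first-order $Z$-derivative of $\escpr{Z,Y}$ (the curvature of the curve), so the equation $H=f$ becomes, after parametrizing by arclength or by the parameter $\xi$ along the curve $s\mapsto(s,t(s))$, a first-order ordinary differential equation of the form
\[
\frac{d}{d\xi}\escpr{Z,Y}\big|_{\text{curve}}=\Phi\big(\xi,u,\escpr{Z,X},\escpr{Z,Y}\big)+f,
\]
where $\Phi$ is smooth in its arguments (built from the $g_{ij}$ and their derivatives evaluated along the lifted curve $\Ga$).

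The inductive step then runs as follows. Suppose the characteristic curves are $C^{m+1}$ for some $1\le m\le k$, so in particular $\escpr{Z,Y}$ and $\escpr{Z,X}$ are $C^{m}$ along each curve and the map $\xi\mapsto(\xi,t(\xi))$ is $C^{m+1}$. Since the restriction of $f$ to the $Z$-direction is $C^k\supseteq C^m$, the right-hand side $\Phi+f$ of the displayed ordinary differential equation is then $C^{m}$ in $\xi$ (the $g_{ij}$ are smooth, and the curve $\Ga$ is $C^{m+1}$, so composition keeps $C^m$ regularity; here I use that $\escpr{Z,X}$ inherits the regularity of $\escpr{Z,Y}$ via \eqref{eq:ZX}, exactly as in the proof of Theorem~\ref{th:main}). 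Integrating the ordinary differential equation once gains a derivative: $\frac{d}{d\xi}\escpr{Z,Y}$ is $C^m$, hence $\escpr{Z,Y}$ is $C^{m+1}$, hence $\escpr{Z,X}$ is $C^{m+1}$, and feeding this back into the equation $t'(\xi)=u(\xi,t(\xi))$ of the characteristic curve shows that the curve itself is $C^{m+2}$. Starting from $m=1$ (the conclusion of Theorem~\ref{th:main}) and iterating until $m=k$ yields that the characteristic curves are $C^{k+2}$, which is the claim.

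The main obstacle I anticipate is verifying cleanly that $g(\n_Z Z,\nu_h)=H$ genuinely takes the form of a first-order ordinary differential equation in $\escpr{Z,Y}$ \emph{with a smooth right-hand side}, and that no hidden loss of regularity occurs when passing through $\nu_h$ and the normalization factors: one must confirm that $\nu_h$, $\mnh$, and the expression \eqref{eq:ZX} for $\escpr{Z,X}$ are all smooth functions of $\escpr{Z,Y}$ (away from $\Sg_0$, where the Schwarz inequality is strict so the square root in \eqref{eq:ZX} stays smooth). This is precisely the point where one invokes the geometric computation of the mean curvature — presumably following \cite[\S~4]{GalRit15} — to see that the $Z$-derivative of $\escpr{Z,Y}$ is the only top-order term, the rest being an algebraic expression in lower-order data. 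Once that structural fact is in hand, the bootstrap is routine and the regularity claim $C^{k+2}$ follows by a finite induction.
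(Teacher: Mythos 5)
Your bootstrap argument is essentially the paper's own proof, which is given only by reference to \cite[Proposition~5.3]{GalRit15}: there too one reads the identity $H=f$ as a first-order ODE for $\escpr{Z,Y}$ along each characteristic curve (this is exactly what the weak identity $\int(K\psi+M\psi')\,d\xi=0$ from the proof of Theorem~\ref{th:main} encodes, namely $M'=K$ with $K=K_1-f\det(G)$) and iterates, gaining one derivative per step up to the regularity of $f$ in the $Z$-direction. The structural point you flag as the main obstacle is precisely what that computation settles, so your proposal is correct and matches the intended argument.
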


The proof of last proposition is the analogous of \cite[Proposition~5.3]{GalRit15}. A remarkable consequence of Proposition~\ref{prop:C^k} is that critical points of perimeter (eventually under a volume constraint) with Lipschitz boundaries are foliated by smooth horizontal curves on its regular set.

To provide a geometric characterization of characteristic curves we need the following definition

\begin{definition} A curve $\ga$ is a $\n$-geodesic of curvature $h:I\rightarrow \rr$ if 
\[
\n_{\dot{\ga}}\dot{\ga}+h J(\dot{\ga})=0.
\]
\end{definition} 

\begin{remark}\label{rem:geodeticheaconfronto} We stress that a $\n$-geodesic of curvature $h$ is a sub-Riemannian geodesic if and only if it satisfies 
\[
\dot{\ga}(h)=g(\tau(\dot{\ga}),\dot{\ga}),
\]
see \cite[Proposition~15]{Ru}. 
\end{remark}

\begin{proposition}\label{prop:foliazionepergeodesiche} Let $E\subset\Om$ be a set of prescribed mean curvature $f\in C(\Omega)$ with Lipschitz boundary $\Sg$ in a domain $\Om\subset M$. Then the characteristic curves in $\Sg-\Sg_0$ are $\n$-geodesic of curvature $H$, where $H$ is defined in \eqref{eq:Hstrongsense}. 

\end{proposition}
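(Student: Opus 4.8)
The plan is to verify the defining identity of a $\n$-geodesic directly along each characteristic curve. Fix a point $p\in\Sg\setminus\Sg_0$ and let $\ga$ be the characteristic curve through $p$, i.e. the integral curve of the characteristic field $Z$; parametrize it by arc length so that its unit tangent is $\dga=Z$. By Theorem~\ref{th:main} the curve $\ga$ is of class $C^2$ on $\Sg\setminus\Sg_0$, so the acceleration $\n_{\dga}\dga=\n_Z Z$ is defined in the classical pointwise sense (it depends only on $Z$ along $\ga$), and the mean curvature $H$ is defined pointwise by \eqref{eq:Hstrongsense}. The goal is to show $\n_Z Z=-H\,J(Z)$, which is precisely the equation of a $\n$-geodesic of curvature $H$.

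First I would decompose $\n_Z Z$ in the orthonormal frame $\{Z,\nuh,T\}$, which is available at every regular point because $Z$ and $\nuh$ span $\hhh$ while $T$ is the unit Reeb field orthogonal to $\hhh$. The $Z$-component vanishes since $Z$ is unit and $\n$ is a metric connection: $\escpr{\n_Z Z,Z}=\tfrac{1}{2}Z(\escpr{Z,Z})=0$. The $T$-component also vanishes: using $\n T\equiv 0$ (established in Section~\ref{sec:preliminaries}) together with the fact that $Z$ is horizontal, one gets $\escpr{\n_Z Z,T}=Z(\escpr{Z,T})-\escpr{Z,\n_Z T}=Z(0)-0=0$. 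Hence $\n_Z Z=\escpr{\n_Z Z,\nuh}\,\nuh$, and by the definition of the mean curvature in \eqref{eq:Hstrongsense} this equals $H\,\nuh$.

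It then remains to identify $\nuh$ with $-J(Z)$. In the horizontal plane the characteristic direction and the horizontal Gauss map are related by $Z=J(\nuh)$; combined with $J^2=-\mathrm{Id}$ on $\hhh$ this yields $J(Z)=-\nuh$, the sign being fixed by the orientation convention under which $\{Z,J(Z),T\}$ is positively oriented. Substituting $\nuh=-J(Z)$ into $\n_Z Z=H\,\nuh$ gives $\n_Z Z+H\,J(Z)=0$, so $\ga$ is a $\n$-geodesic of curvature $H$. The argument is local and parallels \cite[\S~4]{GalRit15}; by the Corollary above one moreover has $H=f$ on $\Sg\setminus\Sg_0$.

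The genuine analytic difficulty has already been absorbed into Theorem~\ref{th:main}: without the $C^2$ regularity of $\ga$ the expression $\n_Z Z$ would not make classical sense and $H$ could not be defined pointwise. I therefore expect the only remaining delicate point to be bookkeeping rather than analysis: checking, under the normalization of the contact structure fixed in Section~\ref{sec:2.1}, that $J$ restricts to a rotation on $\hhh$ (so that $|J(Z)|=1$ and $Z=J(\nuh)$ is consistent with $Z,\nuh$ both being unit), and pinning down the sign in $\nuh=-J(Z)$ so that the curvature appears as $+H$ and not $-H$. Once the frame decomposition and this identification are in place, the proposition follows at once.
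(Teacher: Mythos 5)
Your proposal is correct and follows essentially the same route as the paper: the paper's proof is exactly the decomposition of $\n_Z Z$ in the frame $\{Z,\nu_h,T\}$, the vanishing of the $Z$- and $T$-components, the identification of the $\nu_h$-component with $H$ via \eqref{eq:Hstrongsense}, and the substitution $\nu_h=-J(Z)$. You merely supply the justifications (metric compatibility, $\n T\equiv 0$, the orientation/normalization of $J$) that the paper leaves implicit.
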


\begin{proof} Let $Z$ the tangent unit vector to a characteristic curve $\ga:I\rightarrow \Sg-\Sg_0$. Then we have
\[
\begin{split}
\n_Z Z&=g(\n_Z Z,Z)+g(\n_Z Z,T)+g(\n_Z Z,\nu_h)\\
&=g(\n_Z Z,\nu_h)\\
&=-HJ(Z).
\end{split}
\]

\end{proof}

\begin{remark}The result in Proposition~\ref{prop:foliazionepergeodesiche} was shown for $C^2$ CMC surfaces in $\hh^1$, \cite{MR2648078, MR2354992, MR2435652}, in Sasakian space forms, \cite{MR2875642}, and in general 3-dimensional contact manifolds, \cite{MR3044134, MR2401420, MR2579306, MR3259763}.

\end{remark}

\bibliography{lipschitz}

\end{document}